\newtheorem{theorem}{Theorem}[section]
\newtheorem{lemma}[theorem]{Lemma}
\newtheorem{corollary}[theorem]{Corollary}
\newtheorem{definition}[theorem]{Definition}
\newtheorem{remark}[theorem]{Remark}
\newcommand{\ben}{\begin{enumerate}}
\newcommand{\een}{\end{enumerate}}
\begin{document}
\begin{center}\large{\textbf{ ON NORM INEQUALITIES AND ORTHOGONALITY OF COMMUTATORS OF DERIVATIONS
}} \vskip 3mm \textbf{N. B. Okelo$^{1}$ and P. O. Mogotu$^{2}$} \\  $^{1}$African Institute for Mathematical Sciences,\\ 6 Melrose road, Muizenberg, 7945 Cape Town, South Africa.\\ \textbf{Email}: benard@aims.ac.za\\$^{2}$ Department of Pure and Applied Mathematics, \\ Jaramogi Oginga Odinga  University of Science and Technology,\\ Box 210-40601, Bondo-Kenya.\\\textbf{Email}: omokepriscah@yahoo.com

\end{center}

\begin{abstract}
Let $H$ be a complex separable Hilbert space and $B(H)$ the algebra of all bounded linear operators on $H$. In this paper, we give considerable generalizations of the inequalities for norms of commutators of normal operators. Let $S, T \in B(H)$ be positive normal operators with the cartesian decomposition $S=A+iC$ and $T=B+iD$ such that $a_{1}\leq A \leq a_{2},\, b_{1}\leq B \leq b_{2},\,c_{1}\leq C \leq c_{2}$ and $ d_{1}\leq D \leq d_{2}$ for some real numbers $a_{1},\,a_{2},\,b_{1},\,b_{2},\,c_{1},\,c_{2},\,d_{1}$ and $d_{2}$  we have shown that
$\|ST-TS\|\leq \frac{1}{2}\sqrt{(a_{2}-a_{1})^{2}+(c_{2}-c_{1})^{2}}\sqrt{(b_{2}-b_{1})^{2}+(d_{2}-d_{1})^{2}}.$
 Moreover, orthogonality and  norm inequalities for commutators of derivation  are also established. We have shown that if the pair of operators $(S,T)$ satisfies Fuglede-Putnam's property and $C \in ker(\delta _{S,T})$ where $C\in B(H)$ then $\|\delta _{S,T}X+C\|\geq \|C\|.$
\end{abstract}
\textbf{Keywords}: Commutator; Norm inequality; Orthogonality and Derivation

\noindent \textbf{AMS Subject Classification}: 47A12, 47A30, 47A63

\section{Introduction}
\noindent Studies on commutators and their norm inequalities have been considered by several mathematicians \cite{Kit2}, \cite{Oke} and  \cite{Mec}. Very interesting results have been obtained in special cases however, a generalization in infinite dimensional complex separable Hilbert space remain interesting.  At this point we start by defining some key terms that are useful in the sequel.
\begin{definition}
An element  $S\in B(H)$ is a commutator if there exist $X, T \in B(H)$ such that $S=XT-TX; $ is positive if $\|S\|\geq 0; $
normal if $SS^{*}=S^{*}S;$ and self-adjoint if $S=S^{*}.$ 
\end{definition}
\begin{definition}
For $S,T\in B(H),$ let $\delta _{S,T}$ denote the operator on $B(H)$ defined by $\delta _{S,T}X=SX-XT$ is called the generalized derivation. If $S=T,\,\delta _{S}X=SX-XS$ is called the inner derivation induced by $S\in B(H).$
\end{definition}

\begin{definition}\label{defn1}
Let $S,T\in B(H).$ We say that the pair $S,T$ satisfies $(FP)_{B(H)}$ the Fuglede-Putnam's property, if $SC=CT$ where $C\in B(H)$ implies $S^{*}C=CT^{*}.$
\end{definition}

\section{ Inequalities for norms of commutators}
\noindent Our aim in this section is to establish some inequalities of norms of commutators of normal operators that can be obtained naturally from cartesian decomposition and various vector inequalities in inner product spaces for example, reverse of quadratic Schwarz inequality. In the following result, we obtain a norm inequality for commutators of normal operators.
\begin{theorem}
Let $S, T \in B(H)$ be positive normal operators with the cartesian decomposition $S=A+iC$ and $T=B+iD$ such that $a_{1}\leq A \leq a_{2},\, b_{1}\leq B \leq b_{2},\,c_{1}\leq C \leq c_{2}$ and $ d_{1}\leq D \leq d_{2}$ for some real numbers $a_{1},\,a_{2},\,b_{1},\,b_{2},\,c_{1},\,c_{2},\,d_{1}$ and $d_{2}$ then,
\begin{eqnarray}\label{eqnm}
\|ST-TS\|\leq \frac{1}{2}\sqrt{(a_{2}-a_{1})^{2}+(c_{2}-c_{1})^{2}}\sqrt{(b_{2}-b_{1})^{2}+(d_{2}-d_{1})^{2}}
\end{eqnarray}
\end{theorem}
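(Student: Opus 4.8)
The plan is to exploit the translation invariance of the commutator together with the fact that the Cartesian parts of a normal operator commute. First I would normalise by centering the spectra: set $A' = A - \frac{a_1+a_2}{2}I$, $C' = C - \frac{c_1+c_2}{2}I$, $B' = B - \frac{b_1+b_2}{2}I$ and $D' = D - \frac{d_1+d_2}{2}I$. Since these are shifts by real scalars, writing $S' = A' + iC'$ and $T' = B' + iD'$ only alters $S$ and $T$ by complex scalar multiples of the identity; hence $S'T' - T'S' = ST - TS$ and it suffices to bound $\|S'T' - T'S'\|$. The hypotheses $a_1 \le A \le a_2$, etc., translate into the operator-norm bounds $\|A'\| \le \frac{1}{2}(a_2 - a_1)$, $\|C'\| \le \frac{1}{2}(c_2 - c_1)$, $\|B'\| \le \frac{1}{2}(b_2 - b_1)$ and $\|D'\| \le \frac{1}{2}(d_2 - d_1)$, and each primed operator remains self-adjoint.

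Next I would establish the key norm estimate $\|S'\| \le \frac{1}{2}\sqrt{(a_2-a_1)^2 + (c_2-c_1)^2}$. The crucial point is that $S$ (hence $S'$) is normal if and only if its self-adjoint Cartesian parts commute, i.e. $A'C' = C'A'$. By the spectral theorem for the commuting pair $(A', C')$, equivalently for the normal operator $S'$, there is a projection-valued measure $E$ on $\sigma(S') \subseteq \C$ with $A' = \int \mathrm{Re}(z)\,dE$ and $C' = \int \mathrm{Im}(z)\,dE$; the bounds on $A'$ and $C'$ then confine $\sigma(S')$ to the centered rectangle of half-widths $\frac{1}{2}(a_2-a_1)$ and $\frac{1}{2}(c_2-c_1)$. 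Since $S'$ is normal its norm equals its spectral radius, and every point of that rectangle has modulus at most $\frac{1}{2}\sqrt{(a_2-a_1)^2 + (c_2-c_1)^2}$, giving the estimate. Equivalently, commutativity yields $\|S'\|^2 = \|A'^2 + C'^2\| \le \|A'\|^2 + \|C'\|^2$. The identical argument gives $\|T'\| \le \frac{1}{2}\sqrt{(b_2-b_1)^2 + (d_2-d_1)^2}$.

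Finally I would combine these with the elementary commutator bound $\|S'T' - T'S'\| \le \|S'T'\| + \|T'S'\| \le 2\|S'\|\,\|T'\|$. Substituting the two estimates, the factor $2$ cancels one of the two factors of $\frac{1}{2}$, producing exactly $\|ST-TS\| \le \frac{1}{2}\sqrt{(a_2-a_1)^2+(c_2-c_1)^2}\,\sqrt{(b_2-b_1)^2+(d_2-d_1)^2}$, as claimed.

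I expect the main obstacle to be the rigorous justification of the norm estimate for the normal operator, rather than the final inequality, which is purely formal. Everything hinges on using normality to locate $\sigma(S')$ inside the rectangle determined by the spectral bounds on $A'$ and $C'$; without the commutativity $A'C' = C'A'$ one obtains only the weaker bound $\|S'\| \le \|A'\| + \|C'\|$, which is too lossy to yield the stated constant. It is worth noting that the positivity hypothesis plays no role in this route (indeed a genuinely positive operator would force $C = 0$), so the argument should go through for all normal $S, T$ satisfying the stated spectral bounds.
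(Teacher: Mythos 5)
Your proposal is correct and follows essentially the same route as the paper: the paper also centers via $z=\frac{a_1+a_2}{2}+i\frac{c_1+c_2}{2}$ and $w=\frac{b_1+b_2}{2}+i\frac{d_1+d_2}{2}$, bounds $\|ST-TS\|\leq 2\|S-z\|\,\|T-w\|$, and uses the normality estimate $\|S-z\|^{2}\leq\|A-a\|^{2}+\|C-c\|^{2}$ (which it attributes to Stampfli, whereas you prove it directly from the spectral theorem for the commuting Cartesian parts). Your version is in fact cleaner than the paper's, which contains arithmetic slips (e.g.\ writing $\|A-a\|\leq\frac{a_1+a_2}{2}$ instead of $\frac{a_2-a_1}{2}$, and a factor of $2$ rather than $4$ under the intermediate square roots), and your observation that positivity of $S$ and $T$ is irrelevant (indeed incompatible with $C,D\neq 0$) is well taken.
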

\begin{proof}
Since $S, T \in B(H)$ are normal such that $S=A+iC$ and $T=B+iD$ are the cartesian decomposition of $S$ and $T$. Then $S-z$ and $T-w$ are normal for all complex numbers $z$ and $w$ such that $a=\frac{a_{1}+a_{2}}{2},\, b=\frac{b_{1}+b_{2}}{2},\, c=\frac{c_{1}+c_{2}}{2},\, d=\frac{d_{1}+d_{2}}{2},\, z=a+ic$ and $w=b+id.$  Then
\begin{eqnarray}\label{eqnr}
 \nonumber \|ST-TS\|&=& \|(S-z)(T-w)-(T-w)(S-z)\|\\ \nonumber &\leq& \|S-z\|\|T-w\|+\|T-w\|\|S-z\|\\ &\leq& 2\|S-z\|\|B-w\|.
\end{eqnarray}
Following an analogous argument of  \cite{Sta} we have
\begin{eqnarray}\label{eqnn}
\|S-z\|^{2}\leq \|A-a\|^{2}+\|C-c\|^{2}.
\end{eqnarray}
Similarly
\begin{eqnarray}\label{eqno}
\|T-w\|^{2}\leq \|B-b\|^{2}+\|D-d\|^{2}.
\end{eqnarray}
Suppose $A,B,C,D \in B(H)$ are self-adjoint with $a_{1}\leq A \leq a_{2},\, b_{1}\leq B \leq b _{2},\,c_{1}\leq C \leq c_{2}$ and $ d_{1}\leq D \leq d_{2}$ for some real numbers $a_{1},\,a_{2},\,b_{1},\,b_{2},\,c_{1},\,c_{2},\,d_{1}$ and $d_{2}$ and $a=\frac{a_{1}+a_{2}}{2}$, then $-(\frac{a_{1}+a_{2}}{2})\leq A-a \leq \frac{a_{1}+a_{2}}{2}$ and so $\|A-a\|\leq \frac{a_{1}+a_{2}}{2}$. Similarly, $\|B-b\|\leq \frac{b_{1}+b_{2}}{2},\,\|C-c\|\leq \frac{c_{1}+c_{2}}{2}$ and $\|D-d\|\leq \frac{d_{1}+d_{2}}{2}$. Which upon substituting in Inequality \ref{eqnn} and Inequality \ref{eqno} we obtain
\begin{eqnarray}\label{eqnp}
\|S-z\|\leq \sqrt{\frac{(a_{2}-a_{1})^{2}}{2}+\frac{(c_{2}-c_{1})^{2}}{2}}.
\end{eqnarray}
and
\begin{eqnarray}\label{eqnq}
\|T-w\|\leq \sqrt{\frac{(b_{2}-b_{1})^{2}}{2}+\frac{(d_{2}-d_{1})^{2}}{2}}.
\end{eqnarray}
Substituting Inequality \ref{eqnp} and Inequality \ref{eqnq} into Inequality \ref{eqnr} we obtain $$\|ST-TS\|\leq 2\sqrt{\frac{(a_{2}-a_{1})^{2}}{2}+\frac{(c_{2}-c_{1})^{2}}{2}}\sqrt{\frac{(b_{2}-b_{1})^{2}}{2}+\frac{(d_{2}-d_{1})^{2}}{2}}$$ which upon simplification yields $$\|ST-TS\|\leq \frac{1}{2}\sqrt{(a_{2}-a_{1})^{2}+(c_{2}-c_{1})^{2}}\sqrt{(b_{2}-b_{1})^{2}+(d_{2}-d_{1})^{2}}.$$
\end{proof}
\begin{corollary}\label{cora}
Let $S, T \in B(H)$ be normal operators with the cartesian decomposition $S=A+iC$ and $T=B+iD$ such that $C$ and $D$ are positive, then
\begin{eqnarray}\label{eqns}
 \|ST-TS\|\leq \frac{1}{2}\sqrt{4\|A\|^{2}+\|C\|^{2}}\sqrt{4\|B\|^{2}+\|D\|^{2}}.
 \end{eqnarray}
\end{corollary}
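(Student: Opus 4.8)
The plan is to obtain this corollary as a direct specialization of the preceding theorem, simply by choosing the constants $a_{1},a_{2},b_{1},b_{2},c_{1},c_{2},d_{1},d_{2}$ so that they record the spectral bounds that are automatically available for the components $A,B,C,D$. First I would observe that, being the real and imaginary parts of the Cartesian decompositions, the operators $A,B,C,D$ are all self-adjoint. For a self-adjoint operator $R$ one has $-\|R\|\leq R\leq\|R\|$ in the operator order, so in particular $-\|A\|\leq A\leq\|A\|$ and $-\|B\|\leq B\leq\|B\|$. This lets me take $a_{1}=-\|A\|$, $a_{2}=\|A\|$, $b_{1}=-\|B\|$, $b_{2}=\|B\|$, whence $(a_{2}-a_{1})^{2}=4\|A\|^{2}$ and $(b_{2}-b_{1})^{2}=4\|B\|^{2}$.

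Next I would exploit the extra hypothesis that $C$ and $D$ are positive. For a positive operator $R$ the spectrum lies in $[0,\|R\|]$, so $0\leq C\leq\|C\|$ and $0\leq D\leq\|D\|$. Taking $c_{1}=0$, $c_{2}=\|C\|$, $d_{1}=0$, $d_{2}=\|D\|$ then gives $(c_{2}-c_{1})^{2}=\|C\|^{2}$ and $(d_{2}-d_{1})^{2}=\|D\|^{2}$. Substituting these four quantities into the estimate of the theorem immediately yields
$$\|ST-TS\|\leq\frac{1}{2}\sqrt{4\|A\|^{2}+\|C\|^{2}}\sqrt{4\|B\|^{2}+\|D\|^{2}},$$
which is exactly the claimed inequality.

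The one point requiring a little care is that the theorem as stated assumes $S$ and $T$ to be \emph{positive} normal operators, whereas the corollary assumes them merely normal (with only $C$ and $D$ positive). I would therefore check that the proof of the theorem never actually invokes positivity of $S$ or $T$ themselves: each step rests only on the normality of $S,T$ (hence of $S-z$ and $T-w$) and on the spectral bounds of the self-adjoint components $A,B,C,D$. Since those bounds are precisely what I have supplied above, the theorem's conclusion applies verbatim and no further work is needed. I expect this bookkeeping about which hypotheses are genuinely used to be the main subtlety, the remainder being a mechanical substitution.
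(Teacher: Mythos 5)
Your proposal is correct and follows essentially the same route as the paper: the paper's proof is precisely the substitution $a_{1}=-\|A\|$, $a_{2}=\|A\|$, $c_{1}=0$, $c_{2}=\|C\|$, $b_{1}=-\|B\|$, $b_{2}=\|B\|$, $d_{1}=0$, $d_{2}=\|D\|$ into the theorem's bound. Your additional check that the theorem's argument never actually uses positivity of $S$ and $T$ themselves (only normality and the spectral bounds on $A,B,C,D$) is a point the paper silently skips over, and it is worth having made explicit.
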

\begin{proof}
 Consider Inequality \ref{eqnr} and let $a_{1}=-\|A\|,\, a_{2}=\|A\|,\, c_{1}=0,\, c_{2}=\|C\|,\, b_{1}=-\|B\|,\, b_{2}=\|B\|,\, d_{1}=0$ and $d_{2}=\|D\|.$ Substituting in Inequality  \ref{eqns} we obtain $$\|ST-TS\|\leq \frac{1}{2}\sqrt{4\|A\|^{2}+\|C\|^{2}}\sqrt{4\|B\|^{2}+\|D\|^{2}}.$$
\end{proof}
\begin{remark}
In Corollary \ref{cora} if we instead of the assumption that $C$ and $D$ are positive, we can assume that $S$ and $T$ are positive, then we obtain the inequality $$\|ST-TS\|\leq \frac{1}{2}\sqrt{\|A\|^{2}+4\|C\|^{2}}\sqrt{\|B\|^{2}+4\|D\|^{2}}.$$
\end{remark}
\begin{corollary}
Let $S \in B(H)$ with the cartesian decomposition $S=A+iC$ such that $A$ and $C$ are positive. Then $\|S^{*}S-SS^{*}\|\leq \frac{1}{2} (\|A\|^{2}+\|C\|^{2})$
\end{corollary}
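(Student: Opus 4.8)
The plan is to reduce the self-commutator $S^{*}S-SS^{*}$ to the commutator of the real and imaginary parts $A$ and $C$, and then invoke the Theorem above. First I would expand using the cartesian decomposition $S=A+iC$, so that $S^{*}=A-iC$. A direct multiplication gives $S^{*}S=A^{2}+C^{2}+i(AC-CA)$ and $SS^{*}=A^{2}+C^{2}-i(AC-CA)$, whence $S^{*}S-SS^{*}=2i(AC-CA)$. Since multiplication by $i$ is an isometry on $B(H)$, this yields $\|S^{*}S-SS^{*}\|=2\|AC-CA\|$, reducing the problem to estimating the single commutator $\|AC-CA\|$.

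The second step is to apply the Theorem to the pair $A,C$ in place of $S,T$. Each of $A$ and $C$ is self-adjoint, hence normal, and positive by hypothesis; regarded through its own cartesian decomposition, $A$ has real part $A$ and vanishing imaginary part, and similarly for $C$. Positivity provides the bounds $0\leq A\leq\|A\|$ and $0\leq C\leq\|C\|$, while the vanishing imaginary parts give the degenerate bounds $0\leq 0\leq 0$. Substituting $a_{1}=0,\,a_{2}=\|A\|,\,b_{1}=0,\,b_{2}=\|C\|$ and $c_{1}=c_{2}=d_{1}=d_{2}=0$ into Inequality \ref{eqnm} collapses the second term under each radical and produces $\|AC-CA\|\leq\frac{1}{2}\|A\|\,\|C\|$.

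Combining the two steps gives $\|S^{*}S-SS^{*}\|=2\|AC-CA\|\leq\|A\|\,\|C\|$, and the estimate is then finished by the arithmetic--geometric mean inequality $\|A\|\,\|C\|\leq\frac{1}{2}\bigl(\|A\|^{2}+\|C\|^{2}\bigr)$, yielding the claimed bound $\|S^{*}S-SS^{*}\|\leq\frac{1}{2}\bigl(\|A\|^{2}+\|C\|^{2}\bigr)$.

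The calculations are routine, and I expect the only point requiring care to be the legitimacy of applying the Theorem directly to $A$ and $C$, that is, treating these self-adjoint positive operators as normal operators whose cartesian decompositions carry a degenerate (zero) imaginary component. Once that reading is accepted, the correct matching of the numerical bounds $a_{i},b_{i},c_{i},d_{i}$ is the principal bookkeeping task, after which the conclusion follows mechanically.
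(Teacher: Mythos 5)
Your proof is correct and follows the paper's overall skeleton --- expand $S^{*}S-SS^{*}=2i(AC-CA)$, bound $\|AC-CA\|$ by $\tfrac{1}{2}\|A\|\|C\|$, and finish with the arithmetic--geometric mean inequality --- but you justify the middle step differently. The paper obtains $2\|AC-CA\|\leq\|A\|\|C\|$ by citing an external result (Inequality 36 of \cite{Kit2}), whereas you derive the same estimate internally by applying Inequality \ref{eqnm} to the pair $A,C$ viewed as positive normal operators with degenerate cartesian decompositions $A=A+i\cdot 0$, $C=C+i\cdot 0$, choosing $a_{1}=0,\ a_{2}=\|A\|,\ b_{1}=0,\ b_{2}=\|C\|$ and $c_{1}=c_{2}=d_{1}=d_{2}=0$ so that the radicals collapse to $\|A\|$ and $\|C\|$. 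This instantiation is legitimate: positivity gives $0\leq A\leq\|A\|$ and $0\leq C\leq\|C\|$, and a self-adjoint operator is normal, so all hypotheses of the Theorem are met. What your route buys is self-containment --- the corollary becomes a genuine corollary of the section's main theorem rather than of a cited inequality of Kittaneh --- at the cost of inheriting whatever weaknesses the Theorem's own proof has; the paper's citation is more direct but leans on an external source for the key commutator bound. Either way the final chain $\|S^{*}S-SS^{*}\|=2\|AC-CA\|\leq\|A\|\|C\|\leq\tfrac{1}{2}(\|A\|^{2}+\|C\|^{2})$ is the same.
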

\begin{proof}
Let $S \in B(H)$ has the cartesian decomposition $S=A+iC$. Also let  $A$ and $C$ be self-adjoint and $S^{*}S-SS^{*}=2i(AC-CA).$ Using [\cite{Kit2}, Inequality 36] and the arithmetic geometric mean inequality, we have $\|S^{*}S-SS^{*}\|=2\|AC-CA\|\leq \|A\|\|C\|\leq \frac{1}{2}(\|A\|^{2}+\|C\|^{2}).$
\end{proof}
\begin{theorem}
Let $S, T \in B(H)$ be normal operators with the cartesian decomposition $S=A+iC$ and $T=B+iD$ such that $a_{1}\leq A \leq a_{2},\, b_{1}\leq B \leq b_{2},\,c_{1}\leq C \leq c_{2}$ and $ d_{1}\leq D \leq d_{2}$ for some real numbers $a_{1},\,a_{2},\,b_{1},\,b_{2},\,c_{1},\,c_{2},\,d_{1}$ and $d_{2}$ and $X$ and $Y$ are compact then,
$s_{j}(SX-YT)\leq \max (\|A\|,\|B\|)s_{j}(X\oplus Y)$ for $j=1,2,...$
\end{theorem}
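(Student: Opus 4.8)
The plan is to deduce the estimate for $s_{j}(SX-YT)$ from a singular value inequality for commutators of \emph{positive} self-adjoint operators, after transferring the problem to $2\times 2$ operator matrices over $H\oplus H$. The constant $\max(\|A\|,\|B\|)$ is the signature of the elementary observation that a difference of positive operators is controlled by the larger norm: if $P,Q\ge 0$ then $-Q\le P-Q\le P$, whence $\|P-Q\|\le\max(\|P\|,\|Q\|)$. The whole argument is the lift of this scalar-type estimate to all singular values. To set it up I would use the normality of $S$ and $T$: as in Theorem 2.1 the translates $S-z,\,T-w$ stay normal, and the Cartesian data let me work with the self-adjoint parts $A$ and $B$, for which $\|A\|\le\|S\|$ and $\|B\|\le\|T\|$.

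Next I would realise $SX-YT$ as a corner of a block operator so that the two compact operators appear together as $X\oplus Y$. Writing $Z=X\oplus Y$ on $H\oplus H$ and taking $R$ to be the diagonal block operator carrying $A,B$, I would identify $SX-YT$ (up to the contribution of the imaginary parts) with a nonzero corner of a commutator $RZ'-Z'R$, where $Z'$ is the off-diagonal rearrangement of the entries of $Z$; recall that the singular values of an off-diagonal block operator are exactly the combined singular values of its corners, which is what promotes $X$ alone to $X\oplus Y$. Running this through the classical product bound $s_{j}(MZN)\le\|M\|\,\|N\|\,s_{j}(Z)$ together with the pinching (Fan dominance) principle, the engine of the proof becomes the termwise inequality
\[
s_{j}(PZ-ZQ)\le\max(\|P\|,\|Q\|)\,s_{j}(Z\oplus Z),\qquad P,Q\ge 0,
\]
and the block bookkeeping then separates the two entries of $Z$ into $X$ and $Y$, producing $s_{j}(X\oplus Y)$ on the right with $\max(\|A\|,\|B\|)$ in front.

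I expect two obstacles. The first is that the norm bound $\|P-Q\|\le\max(\|P\|,\|Q\|)$ must be upgraded to a \emph{termwise} singular value inequality; this is precisely where Fan dominance and pinching are indispensable, since naive term-by-term domination can fail and one must instead control the partial sums $\sum_{j\le k}s_{j}$ and invoke Fan's theorem. The second, and more serious, is that positivity cannot be dispensed with: for merely self-adjoint (sign-indefinite) parts the target inequality already fails for scalars, as $a=1,\,b=-1,\,X=I$ gives $|a-b|=2>1=\max(|a|,|b|)$. Consequently the hypotheses must be read so that $A$ and $B$ are positive — in keeping with the positivity that runs through Theorem 2.1 — and the genuinely delicate step is to show, using normality, that the corner of the block commutator reduces to $SX-YT$ with $X$ and $Y$ separated while the imaginary parts $C$ and $D$ contribute nothing to the constant.
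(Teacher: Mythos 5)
Your proposal correctly identifies the natural ``engine'' --- the inequality $s_{j}(PZ-ZQ)\le\max(\|P\|,\|Q\|)\,s_{j}(Z\oplus Z)$ for positive $P,Q$ and compact $Z$, which is essentially the main theorem of \cite{Kit2} --- and your observation that positivity of the real parts cannot be dispensed with is sound. But the step you defer as ``genuinely delicate,'' namely that the imaginary parts $C$ and $D$ contribute nothing to the constant, is not delicate: it is false, and with it the theorem as stated. Take $S=T=iI$, so that $A=B=0$ and $C=D=I$ (with $0\le A\le 0$, $1\le C\le 1$, etc.), let $X$ be a rank-one projection and $Y=-X$. Then $SX-YT=iX+iX=2iX$, so $s_{1}(SX-YT)=2$, while $\max(\|A\|,\|B\|)\,s_{1}(X\oplus Y)=0$. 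No block rearrangement can recover $C$ and $D$ from a right-hand side that sees only $\|A\|$ and $\|B\|$, so the proposal cannot be completed; at best it yields a correct statement for positive self-adjoint $S$ and $T$, which is again just \cite{Kit2}.

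For the record, the paper's own argument is quite different from yours and is not a proof either. It translates by the common midpoint $\mu=\frac{z+w}{2}$ and asserts the identity $SX-YT=(S-\mu)X-Y(T-\mu)$, which fails unless $X=Y$ (the two sides differ by $\mu(X-Y)$); it then bounds $s_{j}(SX-YT)$ by $(\|S-\mu\|+\|T-\mu\|)\,s_{j}(X\oplus Y)$ and, after several lines of algebra, arrives at a constant of the form $\max(b_{2}-a_{1},a_{2}-b_{1})+\max(d_{2}-c_{1},c_{2}-d_{1})$. The stated conclusion is then extracted only by setting $a_{1}=a_{2}=b_{1}=c_{1}=c_{2}=d_{1}=0$, $b_{2}=\|A\|$, $d_{2}=\|B\|$ --- i.e.\ by specializing to $A=C=0$ --- which does not establish the theorem in the claimed generality. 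Your instinct that the disappearance of the imaginary parts is the crux is exactly right; the honest conclusion is that the crux cannot be supplied, and the statement needs either added hypotheses (e.g.\ $S,T$ positive self-adjoint) or a constant that also controls $C$ and $D$.
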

\begin{proof}
Let $a=\frac{a_{1}+a_{2}}{2},\, b=\frac{b_{1}+b_{2}}{2},\, c=\frac{c_{1}+c_{2}}{2},\, d=\frac{d_{1}+d_{2}}{2},\, z=a+ic$ and $w=b+id.$ We have, $(SX-YT)=(S-\frac{z+w}{2})X-Y(T-\frac{z+w}{2}).$ Taking the norms we obtain
$s_{j}\|SX-YT\|\leq \|(S-\frac{z+w}{2})\|+\|T-(\frac{z+w}{2})\|s_{j}(X\oplus Y).$ Since $S$ and $T$ are normal then $S-z$ and $T-w$ is normal. It follows by analogy that
\begin{eqnarray}\label{eqns}
\nonumber s_{j}\|SX-YT\|&\leq& \|(S-z)-\frac{z+w}{2}\|+\|(T-w)\\& &-\frac{z+w}{2})\|s_{j}(X\oplus Y).\\ \nonumber&\leq& (\|S-z\|+\|T-w\|+|z-w|)s_{j}(X\oplus Y).\\ \nonumber&\leq&(\sqrt{\|A-a\|^{2}+\|C-c\|^{2}}+\sqrt{\|B-b\|^{2}+\|D-d\|^{2}}\\ \nonumber & &+\sqrt{(a-b)^{2}+(c-d)^{2}})\; \; \; \;\;by \;\;(Equation\; \ref{eqnp}) \\ \nonumber&\leq& (\|A-a\|+\|B-b\|+\|C-c\|+\|D-d\|+|a-b|+|c-d|)s_{j}(X\oplus Y)\\ \nonumber&\leq&(\frac{a_{2}-a_{1}+b_{2}-b_{1}+c_{2}-c_{1}+d_{2}-d_{1}}{2}+\\ \nonumber & &|\frac{|a_{2}-a_{1}+b_{2}-b_{1}+c_{2}|-|c_{1}+d_{2}-d_{1}|}{2}|)s_{j}(X\oplus Y)\\ \nonumber&=&\frac{(b_{2}-a_{1})+(a_{2}-b_{1})+|(b_{2}-a_{1})-(a_{2}-b_{1})|}{2}\\ \nonumber& &+\frac{(d_{2}-c_{1})+(d_{2}-c_{1})+|(d_{2}-c_{1})-(d_{2}-c_{1})|}{2}s_{j}(X\oplus Y)\\ \nonumber &= &(\max(b_{2}-a_{1},a_{2}-b_{1})+(\max(d_{2}-c_{1},c_{2}-d_{1})s_{j}(X\oplus Y).
\end{eqnarray}
Letting $a_{1}=a_{2}=b_{1}=c_{1}=c_{2}=d_{1}=0,$ $b_{2}=\|A\|$ and $d_{2}=\|B\|$ then we have\\ $s_{j}(SX-YT)\leq \max (\|A\|,\|B\|)s_{j}(X\oplus Y)$ for $j=1,2,...$
\end{proof}
\begin{corollary}
Let $S \in B(H)$ be normal with the cartesian decomposition $S=A+iC$ if $a_{1}\leq A\leq a_{2}$ and $c_{1}\leq C\leq c_{2}$ for some real numbers $a_{1},\,a_{2},\,c_{1} $ and $c_{2}$ and if $X,Y$ are compact, then $s_{j}\|SX-YT\|\leq\|A\|s_{j}(X\oplus Y)$ for $j=1,2,...$
\end{corollary}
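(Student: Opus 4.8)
The plan is to obtain this corollary as a direct specialization of the preceding theorem rather than by a fresh argument. The theorem produces the bound $s_{j}(SX-YT)\leq \max(\|A\|,\|B\|)\,s_{j}(X\oplus Y)$ for normal $S=A+iC$ and $T=B+iD$ with prescribed spectral intervals, so the whole task reduces to choosing the parameters of the theorem so that its right-hand side collapses to $\|A\|\,s_{j}(X\oplus Y)$.

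First I would take $T=S$ in the theorem. Under this choice the cartesian decomposition of $T$ coincides with that of $S$, so that $B=A$ and $D=C$, and correspondingly the bounding reals satisfy $b_{1}=a_{1}$, $b_{2}=a_{2}$, $d_{1}=c_{1}$ and $d_{2}=c_{2}$. The hypotheses of the corollary, namely $a_{1}\leq A\leq a_{2}$ and $c_{1}\leq C\leq c_{2}$, are then exactly the hypotheses required by the theorem, and the compactness of $X$ and $Y$ is carried over unchanged. Next I would evaluate the coefficient: since $B=A$ we have $\max(\|A\|,\|B\|)=\max(\|A\|,\|A\|)=\|A\|$. Substituting these identifications into the conclusion of the theorem yields $s_{j}(SX-YS)\leq \|A\|\,s_{j}(X\oplus Y)$ for every $j=1,2,\dots$, which is the asserted inequality (with $T$ read as $S$, as the statement intends).

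I do not expect a genuine obstacle here; the content is entirely in the theorem, and the corollary is pure bookkeeping of parameters. The only point that needs care is ensuring that collapsing the four intervals to two is legitimate, i.e. that setting $T=S$ is consistent with the theorem's standing assumption that both operators be normal and with the positivity/interval constraints. Since $S$ is assumed normal and its spectral data already respect the given bounds, this consistency is immediate, and no reverse Schwarz estimate or further singular-value manipulation is needed beyond what the theorem already supplies.
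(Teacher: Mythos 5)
Your derivation is logically valid as a specialization of the preceding theorem's \emph{statement}: taking $T=S$ forces $B=A$ and $D=C$, the interval hypotheses of the corollary become exactly those of the theorem, and $\max(\|A\|,\|B\|)=\|A\|$, which gives the claimed bound (reading the statement's stray $T$ as $S$). The paper, however, does not use the theorem as a black box. It returns to the intermediate chain inside the theorem's proof, namely $s_j(SX-YT)\le(\|S-z\|+\|T-w\|+|z-w|)\,s_j(X\oplus Y)$, sets $T=S$ so that $w=z$ and the $|z-w|$ term vanishes, estimates $2\|S-z\|$ via the earlier bound to get $s_j(SX-YS)\le\sqrt{(a_2-a_1)^2+(c_2-c_1)^2}\,s_j(X\oplus Y)$ (the paper in fact drops a factor of $\sqrt{2}$ here, since $2\sqrt{\tfrac{x}{2}+\tfrac{y}{2}}=\sqrt{2}\sqrt{x+y}$), and only then specializes $a_1=c_1=c_2=0$, $a_2=\|A\|$ to reach the constant $\|A\|$. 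So the two routes genuinely differ: yours buys brevity and avoids re-entering the proof, and along the way needs no parameter gymnastics; the paper's buys the more informative intermediate bound $\sqrt{(a_2-a_1)^2+(c_2-c_1)^2}\,s_j(X\oplus Y)$ and makes explicit which degenerate parameter choices (implicitly $0\le A$ and $C=0$) produce $\|A\|$. Be aware that your route inherits whatever defects the theorem's final statement has --- in the paper that statement is itself only reached after the degenerate substitution $a_1=a_2=b_1=c_1=c_2=d_1=0$, which is inconsistent with nontrivial $A$ and $C$ --- so citing it as a black box is formally fine but rests on the weakest link of the preceding argument.
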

\begin{proof}
From Inequality \ref{eqns} replacing $T,\, b,\,d$ and $w$ by $S,\,a,\,c$ and $z$ we obtain
\begin{eqnarray*}
  s_{j}(SX-YS)&\leq& \|S-z\|+\|S-z\|\leq 2\|S-z\|\ \\ & \leq& 2\sqrt{\frac{(a_{2}-a_{1})^{2}}{2}+\frac{(c_{2}-c_{1})^{2}}{2}}s_{j}(X\oplus Y) \; \; \;\;by \;\;(Equation\; \ref{eqnp})\\ & \leq&\sqrt{(a_{2}-a_{1})^{2}+(c_{2}-c_{1})^{2}}s_{j}(X\oplus Y)
 \end{eqnarray*}
 Let $a_{1}=c_{1}=c_{2}=0$ and $\|a_{2}\|=\|A\|$ we obtain $s_{j}(SX-YS)\leq \|A\| s_{j}(X\oplus Y)$ for $j=1,2,...$
\end{proof}
\begin{lemma}\label{lem3}
Let $S, T \in B(H)$ be normal operators belonging to the norm ideal associated with the Hilbert Schmidt norm $\|.\|_{2}$ such that there product $ST$ is normal. Then
\begin{eqnarray}\label{eqnj}
\|ST\|_{2}\leq \|TS\|_{2}.
\end{eqnarray}
\end{lemma}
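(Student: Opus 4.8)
The plan is to work directly with the Hilbert--Schmidt inner product and to exploit the cyclicity of the trace together with the normality of $S$ and $T$, rather than manipulating operator norms. Recall that on the Hilbert--Schmidt ideal $\mathcal{C}_2$ one has $\|X\|_2^2 = \mathrm{tr}(X^*X)$, and that the product of two Hilbert--Schmidt operators is trace class; consequently every product appearing below lies in the trace class, where the identity $\mathrm{tr}(AB)=\mathrm{tr}(BA)$ is valid. First I would expand $\|ST\|_2^2 = \mathrm{tr}(T^*S^*ST)$ and cycle the leading factor $T^*$ to the end to obtain $\mathrm{tr}(S^*STT^*)$. Since $T$ is normal, $TT^*=T^*T$, and this becomes $\|ST\|_2^2 = \mathrm{tr}(S^*ST^*T)$.

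Treating the other side symmetrically, $\|TS\|_2^2 = \mathrm{tr}(S^*T^*TS)$; cycling the trailing factor $S$ to the front gives $\mathrm{tr}(SS^*T^*T)$, and normality of $S$ (so that $SS^*=S^*S$) turns this into $\mathrm{tr}(S^*ST^*T)$. Thus both $\|ST\|_2^2$ and $\|TS\|_2^2$ reduce to the common quantity $\mathrm{tr}(S^*ST^*T)$, whence $\|ST\|_2 = \|TS\|_2$ and in particular the claimed inequality $\|ST\|_2 \leq \|TS\|_2$ holds. It is worth noting that this argument in fact yields the stronger conclusion of equality, and that it does not use the hypothesis that the product $ST$ is normal: the normality of $S$ and $T$ alone suffices.

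The only point that requires genuine care, and which I regard as the main obstacle, is the justification of the cyclic identity in the infinite-dimensional setting. One must verify that each operator being cycled (for instance $S^*ST$ and $T^*TS$) is trace class, so that $\mathrm{tr}(AB)=\mathrm{tr}(BA)$ may be applied with $A$ trace class and $B$ bounded. This follows from $S^*S,\,T^*T \in \mathcal{C}_1$, since $S,T\in\mathcal{C}_2$, together with the ideal property of $\mathcal{C}_1$ under multiplication by bounded operators. Should one instead wish to make essential use of the hypothesis that $ST$ is normal, an alternative route would combine the Fuglede--Putnam property of Definition \ref{defn1} with the normality of the product; however, the direct trace computation above is both shorter and more transparent.
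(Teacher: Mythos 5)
Your proof is correct, and it takes a genuinely different route from the paper. The paper argues via the numerical radius: it invokes $w(X)=\|X\|$ for normal $X$ and a claimed identity $w(ST)=\|TS\|$, and then chains $\|ST\|_{2}\leq w(ST)=w(TS)\leq \|TS\|_{2}$. That chain is fragile, since for the Hilbert--Schmidt norm the standard comparison runs the other way, $w(X)\leq\|X\|\leq\|X\|_{2}$, so the paper's first inequality is not justified as written. Your trace computation avoids this entirely: writing $\|ST\|_{2}^{2}=\mathrm{tr}(T^{*}S^{*}ST)=\mathrm{tr}(S^{*}STT^{*})=\mathrm{tr}(S^{*}ST^{*}T)$ and symmetrically $\|TS\|_{2}^{2}=\mathrm{tr}(SS^{*}T^{*}T)=\mathrm{tr}(S^{*}ST^{*}T)$ is airtight once the cyclicity of the trace is licensed, and you correctly identify and discharge that obligation (each cycled product is trace class because $S,T\in\mathcal{C}_{2}$ and $\mathcal{C}_{1}$ is an ideal). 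What your approach buys is substantial: you obtain the sharper conclusion $\|ST\|_{2}=\|TS\|_{2}$, you need only the normality of $S$ and $T$ and not the hypothesis that $ST$ is normal, and you stay entirely within elementary properties of the trace rather than relying on numerical radius identities for products of normal operators. The only loss is that the equality conclusion makes the lemma less interesting as an ``inequality,'' but that reflects on the statement rather than on your argument.
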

\begin{proof}
Let $w(A)$ denote the numerical radius of $S$. Then from  $w(S)\leq \|S\|$ and if $S$ is normal, $w(S)= \|S\|.$ Moreover, for any two normal operators $S$ and $T$   we have
\begin{eqnarray}\label{eqnk}
w(ST)= \|TS\|.
\end{eqnarray}
 Suppose $ST$  is normal, then we have;
\begin{eqnarray*}
\|ST\|_{2}\leq w(ST)=w(TS)\leq \|TS\|_{2}.
\end{eqnarray*}
\end{proof}
\begin{lemma}\label{lem4}
Let  $S$ and $T$ be as Lemma \ref{lem3} above. If $ST$ is a normal operator then $\||ST|^{\frac{1}{2}}\|\leq \||TS|^{\frac{1}{2}}\|$.
\end{lemma}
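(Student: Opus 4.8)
The plan is to reduce the inequality $\||ST|^{\frac{1}{2}}\|\le\||TS|^{\frac{1}{2}}\|$ to the ordinary operator-norm inequality $\|ST\|\le\|TS\|$, and then to obtain the latter from the numerical-radius identities already established in the proof of Lemma \ref{lem3}. The reduction rests on an elementary norm identity for square roots of moduli, and the rest is a direct transcription of the argument of Lemma \ref{lem3} with the operator norm replacing the Hilbert--Schmidt norm.

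First I would record that for any $R\in B(H)$ the operator $|R|^{\frac{1}{2}}=(R^{*}R)^{\frac{1}{4}}$ is positive, hence self-adjoint, so its norm coincides with its spectral radius; applying the spectral mapping theorem to $t\mapsto t^{\frac{1}{2}}$ gives $\||R|^{\frac{1}{2}}\|^{2}=\||R|\|$. The $C^{*}$-identity then yields $\||R|\|^{2}=\||R|^{2}\|=\|R^{*}R\|=\|R\|^{2}$, so that $\||R|\|=\|R\|$ and therefore $\||R|^{\frac{1}{2}}\|=\|R\|^{\frac{1}{2}}$. Taking $R=ST$ and $R=TS$ in turn shows that the assertion of the lemma is equivalent to $\|ST\|\le\|TS\|$.

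Next I would prove $\|ST\|\le\|TS\|$ as in Lemma \ref{lem3}. Since $ST$ is normal, $\|ST\|=w(ST)$, where $w(\cdot)$ denotes the numerical radius; using the relation $w(ST)=w(TS)$ for normal factors $S,T$ together with Equation \ref{eqnk} and the general bound $w(\cdot)\le\|\cdot\|$ gives $\|ST\|=w(ST)=w(TS)\le\|TS\|$. Combining this with the first step yields $\||ST|^{\frac{1}{2}}\|=\|ST\|^{\frac{1}{2}}\le\|TS\|^{\frac{1}{2}}=\||TS|^{\frac{1}{2}}\|$, as required. The only delicate point is the first step: the square root may be pulled out of the norm precisely because $|ST|^{\frac{1}{2}}$ and $|TS|^{\frac{1}{2}}$ are positive, so that norm equals spectral radius and functional calculus applies; once this reduction is in place the conclusion follows immediately from the normality of $ST$ and the numerical-radius relations of Lemma \ref{lem3}.
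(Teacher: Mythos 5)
Your proposal is correct in substance but takes a genuinely different route from the paper's. The paper argues in one line directly at the level of the square roots, writing $\||ST|^{\frac{1}{2}}\|=w(|ST|^{\frac{1}{2}})=w(|TS|^{\frac{1}{2}})\leq\||TS|^{\frac{1}{2}}\|$ and attributing the middle equality to Equation \ref{eqnk}; but Equation \ref{eqnk} concerns $w(ST)$ and $\|TS\|$, not the operators $|ST|^{\frac{1}{2}}$ and $|TS|^{\frac{1}{2}}$, and since both of these are positive that asserted equality of numerical radii is in fact equivalent to $\|ST\|=\|TS\|$, which is strictly stronger than the lemma and not what Lemma \ref{lem3} provides. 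You instead first prove the clean identity $\||R|^{\frac{1}{2}}\|=\|R\|^{\frac{1}{2}}$ via positivity, the spectral mapping theorem and the $C^{*}$-identity, which rigorously reduces the lemma to $\|ST\|\leq\|TS\|$, and only then invoke the numerical-radius machinery of Lemma \ref{lem3} at the level of $ST$ and $TS$ themselves; this buys a fully airtight reduction and leaves a residual inequality that is a true classical fact when $ST$ is normal. The one step you should tighten is $w(ST)=w(TS)$: the numerical radius is not invariant under reversing a product in general, and here you are inheriting an unproved identity from the proof of Lemma \ref{lem3} rather than justifying it. The safe replacement is the spectral radius: since $ST$ is normal, $\|ST\|=r(ST)=r(TS)\leq w(TS)\leq\|TS\|$, because $ST$ and $TS$ always have the same nonzero spectrum. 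With that substitution your argument is complete and self-contained, and in my view preferable to the paper's.
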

\begin{proof}
Invoking Equation \ref{eqnk} we have
$ \||ST|^{\frac{1}{2}}\|=w(|ST|^{\frac{1}{2}})= w(|TS|^{\frac{1}{2}})\leq \||TS|^{\frac{1}{2}}\|$.
\end{proof}
\begin{theorem}
Let $X$ be a positive definite operator and let $S$ and $T$ be normal operators belonging to the norm ideal associated with the Hilbert Schmidt norm $\|.\|_{2}.$ Then $\|S-T\|^{2}_{2}\leq \|SX-XT\|^{2}_{2}\|X^{-1}S-TX^{-1}\|^{2}_{2}.$
\end{theorem}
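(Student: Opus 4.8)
The plan is to work inside the Hilbert--Schmidt class, where $\|\cdot\|_2$ is generated by the inner product $\langle P,Q\rangle_2=\mathrm{tr}(Q^{*}P)$, and to reduce the stated estimate to the single \emph{unsquared} inequality
$$\|S-T\|_2\le \|SX-XT\|_2\,\|X^{-1}S-TX^{-1}\|_2,$$
since squaring both sides of this reproduces the statement verbatim. Thus all the content lies in the unsquared bound, and the proof is organised around it. First I would record the algebraic identity that ties the two one-sided commutators together: a direct expansion, using only $X^{-1}X=XX^{-1}=I$, gives
$$(X^{-1}S-TX^{-1})(SX-XT)=(X^{-1}SX-T)^2.$$
This is the key structural fact --- the product of the two factors appearing on the right of the theorem is a perfect square of the single operator $W:=X^{-1}SX-T$.

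Next I would take Hilbert--Schmidt norms and apply the submultiplicativity $\|PQ\|_2\le\|P\|_2\|Q\|_2$, which is valid because the operator norm is dominated by $\|\cdot\|_2$. This yields
$$\|W^2\|_2\le\|X^{-1}S-TX^{-1}\|_2\,\|SX-XT\|_2,$$
so it suffices to establish $\|S-T\|_2\le\|W^2\|_2$ and then square. The role of the hypotheses now becomes visible: since $X$ is positive definite it is self-adjoint and invertible with bounded $X^{-1}$, so the similarity $S\mapsto X^{-1}SX$ is spectrum-preserving, and I would use the normality of $S$ and $T$ together with the Fuglede--Putnam property of Definition \ref{defn1} to pass from $W$ back to $S-T$. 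Concretely, writing $W=(S-T)+X^{-1}(SX-XS)$ isolates the discrepancy as the commutator term $X^{-1}(SX-XS)$, and the Fuglede--Putnam equality $\|SZ-ZT\|_2=\|S^{*}Z-ZT^{*}\|_2$ for the relevant $Z$ is the tool I would use to re-express and control that term within the same two norms.

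The hard part will be exactly this last reduction $\|S-T\|_2\le\|W^2\|_2$, and I expect it to be the genuine obstacle, because for a normal operator $W$ one has $\|W^2\|_2\le\|W\|_2^{2}$ (a power-mean inequality on the eigenvalues), so matching the \emph{quadratic} quantity $W^2$ against the \emph{linear} quantity $S-T$ is delicate and is precisely where the normality of $S,T$ and the positive-definiteness of $X$ must do the essential work, via the spectral behaviour of the conjugation $X^{-1}SX$ and the Fuglede--Putnam control of $SX-XS$. Once $\|S-T\|_2\le\|W^2\|_2$ is in hand, assembling the pieces gives
$$\|S-T\|_2\le\|W^2\|_2\le\|SX-XT\|_2\,\|X^{-1}S-TX^{-1}\|_2,$$
and squaring both sides produces exactly $\|S-T\|_2^{2}\le\|SX-XT\|_2^{2}\,\|X^{-1}S-TX^{-1}\|_2^{2}$, as required.
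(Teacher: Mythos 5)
You have correctly identified the one step you cannot prove, and unfortunately that step is not merely hard --- it is false, and so is the inequality you are reducing to. Take $X=I$, $T=0$, and $S=\epsilon P$ with $P$ a rank-one orthogonal projection and $0<\epsilon<1$. Then $S$ and $T$ are normal Hilbert--Schmidt operators, $X$ is positive definite, and $W=X^{-1}SX-T=S$, so $\|S-T\|_{2}=\epsilon$ while $\|W^{2}\|_{2}=\epsilon^{2}$: your key step $\|S-T\|_{2}\le\|W^{2}\|_{2}$ fails. The same example defeats your unsquared target $\|S-T\|_{2}\le\|SX-XT\|_{2}\,\|X^{-1}S-TX^{-1}\|_{2}$ (it would say $\epsilon\le\epsilon^{2}$) and even the theorem exactly as printed (it would say $\epsilon^{2}\le\epsilon^{4}$). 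The obstruction is homogeneity: replacing $(S,T)$ by $(\epsilon S,\epsilon T)$ scales the left-hand sides linearly (respectively quadratically) but the right-hand sides quadratically (respectively quartically), so no amount of normality, positive definiteness, or Fuglede--Putnam input (Definition \ref{defn1}) can close the gap. Your algebraic identity $(X^{-1}S-TX^{-1})(SX-XT)=(X^{-1}SX-T)^{2}$ is correct and genuinely nice, but it cannot rescue an inhomogeneous statement.

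What is true --- and what the paper's own argument actually establishes, despite the final sentence of its proof --- is the homogeneous inequality $\|S-T\|_{2}^{2}\le\|SX-XT\|_{2}\,\|X^{-1}S-TX^{-1}\|_{2}$, with \emph{first} powers on the right. The paper's route (carried out for self-adjoint $S,T$) is quite different from yours: it writes $\|S-T\|_{2}=\||(S-T)^{2}|^{1/2}\|_{2}$, rearranges via Lemmas \ref{lem3} and \ref{lem4} to reach $\||X^{1/2}(S-T)^{2}X^{-1/2}|^{1/2}\|_{2}$, factors $X^{1/2}(S-T)^{2}X^{-1/2}=[X^{1/2}(S-T)X^{1/2}][X^{-1/2}(S-T)X^{-1/2}]$ and applies the Cauchy--Schwarz inequality for the trace norm, bounds $\|X^{1/2}(S-T)X^{1/2}\|_{2}$ by $\|\mathrm{Re}[(S-T)X]\|_{2}$ (similarly for the $X^{-1/2}$ factor), and then uses the fact that $TX-XT$ and $TX^{-1}-X^{-1}T$ are skew-Hermitian to replace $\mathrm{Re}[(S-T)X]$ by $\mathrm{Re}(SX-XT)$ and $\mathrm{Re}[X^{-1}(S-T)]$ by $\mathrm{Re}(X^{-1}S-TX^{-1})$, finishing with $\|\mathrm{Re}\,A\|_{2}\le\|A\|_{2}$. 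The crucial feature is that each commutator norm enters with exponent $\frac{1}{2}$, keeping both sides homogeneous of degree one in $(S,T)$; squaring then yields the degree-two inequality above, not the degree-four one in the statement. So the mismatch you ran into is not a failure of technique on your part: the theorem as literally stated cannot be proved, and any attempt aimed at it --- including yours --- must break down exactly where yours does.
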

\begin{proof}
Suppose $S$ and $T$ are self-adjoint then we can write $\|S-T\|_{2}=\||(S-T)^{2}|^{\frac{1}{2}}\|_{2}=\||(S-T)X^{-\frac{1}{2}}X^{\frac{1}{2}}(S-T)|^{\frac{1}{2}}\|_{2}.$ Using Lemma \ref{lem3} and Lemma \ref{lem4} we get
\begin{eqnarray*}
\|S-T\|_{2}& \leq &\||X^{\frac{1}{2}}(S-T)^{2}X^{-\frac{1}{2}}|^{\frac{1}{2}}\|_{2}\\ &\leq &(\|X^{\frac{1}{2}}(S-T)X^{\frac{1}{2}}\|_{2}\|X^{-\frac{1}{2}}(S-T)X^{-\frac{1}{2}}\|_{2})^{\frac{1}{2}}\\ &\leq &(\|Re[(S-T)X]\|_{2}\|Re[X^{-1}(S-T)]\|_{2})^{\frac{1}{2}}.
\end{eqnarray*}
Since $TX-XT$ and $TX^{-1}-X^{-1}T$ are skew-Hermitian \cite{Wan} then\\ $Re[(S-T)X]=Re[(S-T)X+(TX-XT)]=Re(SX-XT),$ and \\$Re[X^{-1}(S-T)]=Re[X^{-1}(S-T)+(TX^{-1}-X^{-1}T)]=Re(X^{-1}S-TX^{-1}).$ This implies that $\|S-T\|_{2}\leq (\|Re(SX-XT)\|_{2}\|Re(X^{-1}S-TX^{-1})\|_{2})^{\frac{1}{2}}.$ But $\|Re S\|\leq \|S\|$ for any operator. So we have $\|S-T\|_{2}\leq (\|SX-XT\|_{2}\|X^{-1}S-TX^{-1}\|_{2})^{\frac{1}{2}}$ which upon squaring we obtain the required result.
 \end{proof}
 \begin{corollary} Let $S,T,X \in B(H)$ such that $S$ and $T$ are positive, then $\|SX-XT\|_{2}\|\leq \|X\|_{2}(\|S\|^{2}_{2}+\|T\|^{2}_{2})^{\frac{1}{2}}.$
 \end{corollary}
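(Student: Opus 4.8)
The plan is to square the desired inequality and work with the Hilbert--Schmidt inner product $\langle A,B\rangle_{2}=\mathrm{tr}(B^{*}A)$, for which $\|A\|_{2}^{2}=\mathrm{tr}(A^{*}A)$. The idea is to expand $SX-XT$ directly and use the positivity of $S$ and $T$ to control the resulting cross term; this is precisely where the hypothesis enters, since without positivity one would only recover the cruder bound coming from the triangle inequality.

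First I would expand, using $S^{*}=S$ and $T^{*}=T$ (both operators are positive, hence self-adjoint),
$$\|SX-XT\|_{2}^{2}=\|SX\|_{2}^{2}+\|XT\|_{2}^{2}-2\,\mathrm{Re}\,\mathrm{tr}\big(TX^{*}SX\big).$$
The key observation is that the cross term is automatically nonnegative. Writing $S=S^{1/2}S^{1/2}$ and $T=T^{1/2}T^{1/2}$ and cycling the trace gives
$$\mathrm{tr}\big(TX^{*}SX\big)=\mathrm{tr}\big((S^{1/2}XT^{1/2})(S^{1/2}XT^{1/2})^{*}\big)=\|S^{1/2}XT^{1/2}\|_{2}^{2}\ge 0.$$
In particular this quantity is real, so the $\mathrm{Re}$ is harmless, and discarding it yields $\|SX-XT\|_{2}^{2}\le\|SX\|_{2}^{2}+\|XT\|_{2}^{2}$.

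Next I would bound each surviving term by the standard symmetric-ideal inequality $\|AB\|_{2}\le\|A\|\,\|B\|_{2}$ (operator norm times Hilbert--Schmidt norm), which gives $\|SX\|_{2}^{2}\le\|S\|^{2}\|X\|_{2}^{2}$ and $\|XT\|_{2}^{2}\le\|T\|^{2}\|X\|_{2}^{2}$. Summing and taking square roots produces $\|SX-XT\|_{2}\le\|X\|_{2}(\|S\|^{2}+\|T\|^{2})^{1/2}$, and since the operator norm is dominated by the Hilbert--Schmidt norm the stated form with $\|S\|_{2},\|T\|_{2}$ follows a fortiori. The main obstacle is conceptual rather than computational: one must notice that positivity of $S$ and $T$ is exactly what forces the cross term to be a nonnegative square, for this is what upgrades the crude triangle-inequality estimate $\|X\|_{2}(\|S\|+\|T\|)$ to the sharper Pythagorean form $\|X\|_{2}(\|S\|^{2}+\|T\|^{2})^{1/2}$.
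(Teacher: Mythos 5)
Your proof is correct, and it takes a genuinely different (and sounder) route than the paper. The paper's own argument rewrites $SX-XT$ as $(S-T)X$ and then invokes the identity $\|S-T\|_{2}=(\|S\|_{2}^{2}+\|T\|_{2}^{2})^{1/2}$ for positive $S,T$; the first step is only valid when $X$ commutes with $T$, and the second is a Pythagorean identity that requires $\mathrm{tr}(ST)=0$ (take $S=T$ to see it fail), so the paper's proof does not actually establish the inequality as written. You instead expand $\|SX-XT\|_{2}^{2}$ in the Hilbert--Schmidt inner product and observe that positivity makes the cross term $\mathrm{tr}(TX^{*}SX)=\|S^{1/2}XT^{1/2}\|_{2}^{2}\geq 0$, so discarding it gives $\|SX-XT\|_{2}^{2}\leq\|SX\|_{2}^{2}+\|XT\|_{2}^{2}$, and the remaining terms are controlled by $\|AB\|_{2}\leq\|A\|\,\|B\|_{2}$ together with $\|\cdot\|\leq\|\cdot\|_{2}$. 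This argument is complete, it isolates exactly where positivity enters, and it in fact yields the stronger estimate $\|SX-XT\|_{2}\leq\|X\|_{2}(\|S\|^{2}+\|T\|^{2})^{1/2}$ with operator norms of $S$ and $T$, from which the stated form follows a fortiori. (A minor caveat: the trace manipulations and the finiteness of the right-hand side implicitly require $X$, $S$, $T$ to be Hilbert--Schmidt, but that imprecision is inherited from the statement of the corollary rather than introduced by your proof.)
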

 \begin{proof}
 $\|SX-XT\|_{2}=\|(S-T)X\|_{2}\leq \|S-T\|_{2}\|X\|_{2}$. Since $S$ and $T$ are positive i.e $\|S-T\|_{2}=(\|S\|^{2}_{2}+\|T\|^{2}_{2})^{\frac{1}{2}}$, then we have by \cite{Kit2} \\$\|SX-XT\|_{2}\leq \|X\|_{2}(\|S\|^{2}_{2}+\|T\|^{2}_{2})^{\frac{1}{2}}.$
 \end{proof}
 \begin{theorem}
 Let $S, T \in B(H)$ be positive and self-adjoint operators and $ST-TS$ be also positive. If $n> 0$ is defined such that $\|ST-TS\|\leq n$, then
 \begin{eqnarray}\label{eqni}
  \|STx\|^{2}\geq \frac{1}{n^{2}}(\|STx\|^{4}-|\langle (ST)^{2}x,x\rangle|^{2}) \,\, \forall x \in H,\,, \|x\|=1.
  \end{eqnarray}
 \end{theorem}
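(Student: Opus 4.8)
The plan is to reduce the statement to a controlled Cauchy--Schwarz defect for the pair of vectors $STx$ and $TSx$. First I would set $N=ST$ and observe that, since $S$ and $T$ are self-adjoint, $N^{*}=(ST)^{*}=TS$; hence the commutator appearing in the hypothesis is exactly $ST-TS=N-N^{*}$, a skew-Hermitian operator with $\|N-N^{*}\|=\|ST-TS\|\leq n$. I would then rewrite the quantity $\langle (ST)^{2}x,x\rangle$ in inner-product form so that Cauchy--Schwarz becomes available: $\langle (ST)^{2}x,x\rangle=\langle STx,(ST)^{*}x\rangle=\langle STx,TSx\rangle$. Writing $u=STx$ and $v=TSx$, the bracket on the right-hand side of \ref{eqni} is $\|u\|^{4}-|\langle u,v\rangle|^{2}$, while $u-v=(ST-TS)x$, so that $\|u-v\|\leq\|ST-TS\|\leq n$ for every unit vector $x$.

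The core step is to estimate the Cauchy--Schwarz (Gram) defect of $u$ and $v$. The key algebraic identity is that this defect is unchanged when the second vector is translated along $u$, so one may replace $v$ by $u-v$; concretely,
\[
\|u\|^{2}\|v\|^{2}-|\langle u,v\rangle|^{2}=\|u\|^{2}\|u-v\|^{2}-|\langle u,u-v\rangle|^{2}\leq\|u\|^{2}\|u-v\|^{2}\leq n^{2}\|u\|^{2},
\]
where the first inequality merely discards the nonnegative term $|\langle u,u-v\rangle|^{2}$ and the second uses $\|u-v\|\leq n$. Translating back, this furnishes the clean and fully rigorous bound $\|STx\|^{2}\|TSx\|^{2}-|\langle (ST)^{2}x,x\rangle|^{2}\leq n^{2}\|STx\|^{2}$, valid for every unit vector $x$ and using nothing beyond the commutator norm estimate.

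It then remains to pass from $\|STx\|^{2}\|TSx\|^{2}$ to $\|STx\|^{4}$, after which dividing by $n^{2}$ yields precisely \ref{eqni}. This replacement is exactly the point at which the normality and positivity hypotheses must be invoked, since the norms $\|STx\|$ and $\|TSx\|$ coincide for every $x$ precisely when $ST$ is normal. I expect this last identification to be the main obstacle: for merely self-adjoint, non-commuting $S$ and $T$ the product $ST$ need not be normal and $\|STx\|$ and $\|TSx\|$ genuinely differ, so the inequality that is honestly proved carries the mixed factor $\|STx\|\,\|TSx\|$. Bridging from there to the stated $\|STx\|^{4}$ is the delicate step, and it is where the structural assumptions on $S$, $T$ and on $ST-TS$ must do the real work.
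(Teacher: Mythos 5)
Your reduction is the same one the paper uses: with $u=STx$ and $v=TSx$ you are in effect proving the reverse quadratic Schwarz inequality $\|u\|^{2}\|v\|^{2}-|\langle u,v\rangle|^{2}\leq\|u\|^{2}\|u-v\|^{2}$ (the paper simply quotes it with $\alpha=1$; your Gram--determinant derivation of it is cleaner and fully rigorous), and the identification $\langle (ST)^{2}x,x\rangle=\langle STx,TSx\rangle$ is correct for self-adjoint $S,T$. But as written the argument is incomplete: you end with the mixed factor $\|STx\|^{2}\|TSx\|^{2}$ and explicitly leave open the passage to $\|STx\|^{4}$, which is exactly the estimate the theorem asserts. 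Since $\|u\|^{4}-|\langle u,v\rangle|^{2}$ can exceed $\|u\|^{2}\|v\|^{2}-|\langle u,v\rangle|^{2}$ whenever $\|v\|<\|u\|$, the stated inequality does not follow from the mixed one, so this is a genuine gap and not a cosmetic loose end.

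The missing ingredient is the one hypothesis you never used: $ST-TS$ is assumed positive. For self-adjoint $S$ and $T$ one has $(ST-TS)^{*}=TS-ST=-(ST-TS)$, so the commutator is skew-Hermitian; a positive operator is self-adjoint, and an operator that is simultaneously self-adjoint and skew-adjoint is $0$. Hence the hypotheses force $ST=TS$, so $ST$ is self-adjoint (in particular normal), $\|TSx\|=\|STx\|$, and your mixed inequality turns into the stated one. (In fact the situation degenerates completely: then $\langle (ST)^{2}x,x\rangle=\|STx\|^{2}$, the right-hand side of the theorem's inequality is $0$, and the claim is trivially true.) This is precisely what the paper's proof is doing when it writes ``Since $ST=TS$'' without justification; had you exploited the positivity hypothesis on the commutator, you would have closed your argument in the same way.
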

 \begin{proof}
  We employ the reverse of quadratic Schwarz inequality in  i.e. $$0\leq \|a\|^{2}\|b\|^{2}-|\langle a,b\rangle|^{2}\leq \frac{1}{|\alpha|^{2}}\|a\|^{2}\|a-\alpha b\|^{2}.$$ For every $a, b \in H$, let $\alpha=1\, \,, a=ST\,, b=TS$, we have $$\|STx\|^{2}\|TSx\|^{2}-|\langle STx,TSx\rangle|\leq \|STx\|^{2}\|STx-TSx\|^{2}.$$ Since $ST=TS$, we have $\|STx\|^{4}-|\langle (ST)^{2}x,x\rangle|\leq n^{2}\|STx\|^{2}$ which upon simplification yield, $\|STx\|^{2}\geq \frac{1}{n^{2}}(\|STx\|^{4}-|\langle (ST)^{2}x,x\rangle|^{2}).$
$\frac{1}{2}$ is the best constant possible in Inequality \ref{eqni} in the sense that it cannot be replaced by a smaller quantity in general. The equality case is realized in Inequality \ref{eqni} if, for instance one takes $S=\left(
                       \begin{array}{cc}
                         1 & 1 \\
                         1 & -1 \\
                       \end{array}
                     \right) $ ,\,  $T=\left(
                       \begin{array}{cc}
                         0 & 1 \\
                         1 & 0 \\
                       \end{array}
                     \right) $ and a unit vector  $x=\left(
                       \begin{array}{cc}
                         0  \\
                         1  \\
                       \end{array}
                     \right) .$ Therefore Inequality \ref{eqni} becomes\\ $\|STx\|^{2}\geq \frac{1}{2}(\|STx\|^{4}-|\langle (ST)^{2}x,x\rangle|^{2}) \,\, \forall x \in H,\,, \|x\|=1.$
\end{proof}

 \section{Orthogonality of commutators of derivations}
 \noindent In this section, we give some new results on orthogonality of commutators of normal derivation with respect to Fuglede-Putnam's property and norm-attainable operators.
 \begin{lemma}\label{lemk}
 Let $S,T,C\in B(H)$. Then the following are equivalent
 \begin{itemize}
  \item [i.] The pair $(S,T)$ has the property $(FP)_{B(H)}.$
  \item [ii.] If $SC=CT$, then $R(C)$ reduces $S$, ker$(C)^{\bot}$ reduces $T$ and $S|_{\overline{R(C)}}$ and $T|_{ker(C)^{\bot}}$ are normal operators where $R$ and the $ker$ denote the range and the kernel.
  \end{itemize}
 \end{lemma}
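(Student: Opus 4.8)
The plan is to prove the two implications separately, using the genuinely normal restrictions furnished by (ii) as a bridge to the classical Fuglede--Putnam theorem, and exploiting the global strength of $(FP)_{B(H)}$ to manufacture normality in the forward direction.

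For (ii) $\Rightarrow$ (i): given any $C$ with $SC=CT$, I would let $P$ and $Q$ be the orthogonal projections onto $\overline{R(C)}$ and $\ker(C)^\perp$. By hypothesis these reduce $S$ and $T$ respectively, and $A:=S|_{\overline{R(C)}}$, $B:=T|_{\ker(C)^\perp}$ are honestly normal. Since $C$ annihilates $\ker(C)=Q^\perp H$ and maps into $PH$, it restricts to an operator $C_0\colon QH\to PH$ with $AC_0=C_0B$. The classical Fuglede--Putnam theorem, applied to the normal operators $A$ and $B$, yields $A^*C_0=C_0B^*$, and reassembling through the reducing decompositions gives $S^*C=CT^*$; thus $(S,T)$ satisfies $(FP)_{B(H)}$.

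For (i) $\Rightarrow$ (ii): fix $C$ with $SC=CT$; property $(FP)_{B(H)}$ gives $S^*C=CT^*$. From these two relations I would read off that $S$ and $S^*$ leave $R(C)$ invariant, so $\overline{R(C)}$ reduces $S$; taking adjoints produces $C^*S=TC^*$ and $C^*S^*=T^*C^*$, whence $\ker(C)^\perp=\overline{R(C^*)}$ reduces $T$. A short computation of the form $C^*CT=C^*SC=TC^*C$ (and its starred analogue) shows that $|C|^2$ commutes with both $T$ and $T^*$, so $|C|$ commutes with $T,T^*$. Writing the polar decomposition $C=U|C|$, this lets me transport the intertwining through $U$ to obtain $SU=UT$ and $S^*U=UT^*$ on all of $H$; restricting $U$ to its initial space yields a unitary $V\colon \ker(C)^\perp\to\overline{R(C)}$ with $AV=VB$ and $A^*V=VB^*$, where $A=S|_{\overline{R(C)}}$ and $B=T|_{\ker(C)^\perp}$.

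The crux is then to show $A$ (equivalently $B$) is normal. I would first observe that $(A,B)$ inherits $(FP)_{B(H)}$ from $(S,T)$: embedding any $C_0\colon QH\to PH$ with $AC_0=C_0B$ as the block $C_0\oplus 0$ relative to the reducing decompositions gives $S(C_0\oplus 0)=(C_0\oplus 0)T$, so $(FP)_{B(H)}$ forces $A^*C_0=C_0B^*$. Combining this inherited property with the unitary equivalence $B=V^*AV$ and the relation $A^*V=VB^*$, a direct substitution (replace a commuting $C_1$ by $C_1V$, apply the property of $(A,B)$, then cancel the unitary $V$) shows that in fact $(A,A)$ satisfies $(FP)_{B(H)}$. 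Finally, $(A,A)\in (FP)_{B(H)}$ means every operator commuting with $A$ commutes with $A^*$; applied to the commuting operator $A$ itself this forces $AA^*=A^*A$, so $A$ is normal, and $B=V^*AV$ is normal as well. The main obstacle is precisely this normality step: intertwining by a single $C$ only shows the two restrictions are simultaneously normal or not, and it is overcome by invoking the universal quantifier hidden in $(FP)_{B(H)}$ to descend to the self-pair $(A,A)$, where the commutant characterization of normality applies.
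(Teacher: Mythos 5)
Your proof is correct, and your (ii)$\Rightarrow$(i) direction is essentially the paper's: restrict $C$ to a quasi-affinity $C_0\colon \ker(C)^{\bot}\to\overline{R(C)}$ intertwining the two normal restrictions, invoke the classical Fuglede--Putnam theorem, and reassemble. Where you genuinely diverge is the normality step in (i)$\Rightarrow$(ii). The paper dispatches it in two lines by noting that $SC$ is itself an intertwiner (from $SC=CT$ one gets $S(SC)=(SC)T$), so the hypothesis applies to $SC$ as well; combining $S^{*}(SC)=(SC)T^{*}$ with $S^{*}C=CT^{*}$ yields $(S^{*}S-SS^{*})C=0$, hence normality of $S|_{\overline{R(C)}}$, and the pair $(T^{*},S^{*})$ with intertwiner $C^{*}$ handles $T|_{\ker(C)^{\bot}}$. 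You instead pass through the polar decomposition $C=U|C|$, show $|C|^{2}$ (hence $|C|$) commutes with $T$ and $T^{*}$, extract a unitary $V\colon\ker(C)^{\bot}\to\overline{R(C)}$ with $AV=VB$ and $A^{*}V=VB^{*}$, check that the pair $(A,B)$ inherits the property $(FP)$ by padding intertwiners with zero blocks, descend to the self-pair $(A,A)$ by composing commutants of $A$ with $V$ and cancelling the surjective $V$, and conclude normality from the fact that $A$ commutes with itself. I verified each of these steps and they all hold (in particular $(A^{*}C_{1}-C_{1}A^{*})V=0$ does force $A^{*}C_{1}=C_{1}A^{*}$ since $V$ is onto $\overline{R(C)}$). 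The paper's trick is far shorter; your route costs more work but buys extra information --- the two normal parts are unitarily equivalent via $V$ --- and isolates the clean general principle that a pair $(A,A)$ satisfying the Fuglede--Putnam property must have $A$ normal. One caveat applying equally to both arguments: statement (ii) must be read as quantified over all intertwiners $C$ (as your padded operators $C_{0}\oplus 0$ and the paper's $S_{1}C_{1}=C_{1}T_{1}$ both implicitly require), not for the single $C$ fixed in the lemma's preamble.
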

 \begin{proof}
 $(1)\Rightarrow (2)$ Analogously by the proof of \cite{Mec}  Since $SC=CT$ and the pair $(S,T)$ has the property $(FP)_{B(H)}$, $S^{*}C=CT^{*}$ this implies that $\overline{R(C)}$ and $ker(C)^{\bot}$ are the reducing subspaces for $S$ and $T$. If $S(SC)=(SC)T$, by $(FP)_{B(H)}$ we obtain $S^{*}(SC)=(SC)T^{*}$ and the identity $S^{*}C=CT^{*}$ implies that $S^{*}SC=SS^{*}C$. This shows that $S|_{\overline{R(C)}}$ is normal. Indeed, $(T^{*},S^{*})$ satisfies $(FP)_{B(H)}$ and $(T^{*}C^{*}=C^{*}S^{*}$. Similarly $T^{*}|_{\overline{R(C)}}=(T|_{ker{(C)}})^{*}$\\ $(2)\Rightarrow (1)$ If $ C\in B(H)$ such that $SC=CT.$ Let $S=S_{1}\oplus S_{2}$ with respect to the orthogonal decomposition $H=\overline{R(C)}\oplus \overline{R(C)^{\bot}},\,\,T=T_{1}\oplus T_{2}$ with respect to $H=ker(C)\oplus ker(C)^{\bot}$ and $X: \overline{R(C)}\oplus \overline{R(C)^{\bot}}\rightarrow ker(C)^{\bot}\oplus ker(C)$ have the matrix representation $X=\left[
                                                                                                 \begin{array}{cc}
                                                                                                   X_{1} & X_{2} \\
                                                                                                   X_{3} & X_{4}\\
                                                                                                 \end{array}
                                                                                               \right]$ From $SC=CT$, it follows that $S_{1}C_{1}=C_{1}T_{1}$. Since $S_{1}$ and $T_{1}$ are normal operators, then applying the Fuglede-Putnam's property, we obtain $S^{*}_{1}C_{1}=C_{1}T^{*}_{1}$ which implies that $S^{*}C=CT^{*}.$

 \end{proof}
\begin{theorem}\label{thm4}
 Let $S,T,X\in B(H)$. If the pair of operators $(S,T)$ satisfies Fuglede-Putnam's property and $C \in ker(\delta _{S,T})$ where $C\in B(H)$ then $\|\delta _{S,T}X+C\|\geq \|C\|.$
\end{theorem}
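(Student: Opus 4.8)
The plan is to show that the range of the derivation $\delta_{S,T}$ is orthogonal (in the Birkhoff--James sense) to its kernel, which is exactly the content of the inequality $\|\delta_{S,T}X + C\| \geq \|C\|$ for every $X \in B(H)$ and every $C \in \ker(\delta_{S,T})$. First I would unpack the hypotheses: $C \in \ker(\delta_{S,T})$ means precisely $SC = CT$, and since the pair $(S,T)$ satisfies the Fuglede--Putnam property $(FP)_{B(H)}$, Lemma \ref{lemk} applies. Thus $\overline{R(C)}$ reduces $S$, $\ker(C)^{\bot}$ reduces $T$, and the restrictions $S|_{\overline{R(C)}}$ and $T|_{\ker(C)^{\bot}}$ are normal operators.

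The key step is to exploit this reducing-subspace structure to build a decomposition in which $C$ acts as a normal (or positive) operator on which the quantity $\|\delta_{S,T}X + C\|$ can be bounded below by $\|C\|$. Concretely, I would pass to the polar decomposition $C = U|C|$ and work on the reducing subspace $\overline{R(C)}$; there $S$ and $T$ restrict to normal operators intertwined by an operator with dense range, so by the classical Fuglede--Putnam theorem the restricted operators are in fact unitarily equivalent via $U$. On this subspace the derivation $\delta_{S,T}$ restricts to a normal derivation, and the desired orthogonality for normal derivations is the classical Anderson-type result: for a normal operator $N$, one has $\|NX - XN + C\| \geq \|C\|$ whenever $NC = CN$. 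The plan is to reduce the generalized derivation case, via the unitary equivalence furnished by $U$, to this self-intertwining normal case, where the Anderson argument (integrating against the spectral measure, or equivalently using the fact that a normal element lies in a commutative von Neumann algebra) yields the lower bound directly.

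The main obstacle I anticipate is the passage from the generalized derivation $\delta_{S,T}$ to a genuine inner derivation by a single normal operator. The reduction via Lemma \ref{lemk} only gives normality of $S$ and $T$ on the relevant reducing subspaces, not on all of $H$, so I must be careful to confine the estimate to $\overline{R(C)}$ and to check that the cross terms coming from the complementary subspaces (where $C$ vanishes) cannot decrease the norm below $\|C\|$. This is handled by noting that $\|C\|$ is attained on $\overline{R(C)}$ and that a block-operator norm dominates the norm of any corner, so it suffices to estimate the compression of $\delta_{S,T}X + C$ to $\overline{R(C)}$. Once the problem is localized there and the intertwining operator is replaced by a unitary through the polar decomposition, the normal-derivation orthogonality applies and gives $\|\delta_{S,T}X + C\| \geq \|C\|$, as required.
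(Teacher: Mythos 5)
Your proposal follows essentially the same route as the paper: both invoke Lemma \ref{lemk} to obtain the reducing subspaces and the normality of $S|_{\overline{R(C)}}$ and $T|_{\ker(C)^{\bot}}$, pass to a $2\times 2$ block decomposition, and bound $\|\delta_{S,T}X+C\|$ below by the norm of the corner $\delta_{S_{1},T_{1}}X_{1}+C_{1}$. Your extra step---using the polar decomposition of the quasi-affinity to turn $\delta_{S_{1},T_{1}}$ into an inner derivation by a normal operator and then applying Anderson's orthogonality theorem---in fact supplies the justification for the final bound $\|\delta_{S_{1},T_{1}}X_{1}+C_{1}\|\geq\|C_{1}\|=\|C\|$, which the paper's proof merely asserts (and even miswrites as an equality).
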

\begin{proof}
Since the pair $(S,T)$ satisfies the $(FP)_{B(H)}$ property it follows that from Lemma \ref{lemk} that $\overline{R(C)}$ reduced $S,\,\,ker^{\bot}(C)$ reduces $T$ and $S|_{\overline{R(C)}},\,\,T|_{ker^{\bot}(C)}$ are normal operators.  Letting $C_{o}: ker^{\bot}(C)\rightarrow \overline{R(C)}$ be the quasi-affinity defined by setting $C_{1}x=Cx$ for each $x \in ker^{\bot}$, then it results that $\delta_{S,T}(C_{o})=\delta_{S^{*}_{1},T^{*}_{1}}(C_{o})=0.$ By Lemma \ref{lemk}, we have the matrix representation $S=\left[
                                                                                                                         \begin{array}{cc}
                                                                                                                           S_{1} & 0 \\
                                                                                                                           0 & S_{2} \\
                                                                                                                         \end{array}
                                                                                                                       \right],\,\,T=\left[
                                                                                                                         \begin{array}{cc}
                                                                                                                           T_{1} & 0 \\
                                                                                                                           0 & T_{2} \\
                                                                                                                         \end{array}
                                                                                                                       \right],\,\,C=\left[
                                                                                                                         \begin{array}{cc}
                                                                                                                           C_{1} & 0 \\
                                                                                                                           0 & C_{2} \\
                                                                                                                         \end{array}
                                                                                                                       \right],\,\,X=\left[
                                                                                                                         \begin{array}{cc}
                                                                                                                           X_{1} & X_{2} \\
                                                                                                                           X_{3} & X_{4} \\
                                                                                                                         \end{array}
                                                                                                                       \right]$ Since $S_{1}$ and $T_{1}$ are two normal operators, then $\|\delta_{S,T}(X)+C\|=\|\left(
                                                                                                                                                  \begin{array}{cc}
                                                                                                                                                    \delta _{S_{1},T_{1}}X+C_{1} & * \\
                                                                                                                                                    * &* \\
                                                                                                                                                  \end{array}
                                                                                                                                                \right)
                                                                                                                       \|\geq \|\delta_{S_{1},T_{1}}(X)+C_{1}\|=\|C\|.$

\end{proof}
 \begin{corollary}
 Let $S,T,X\in B(H)$ and $C \in ker(\delta _{S,T})$ then $\|\delta _{S,T}X+C\|\geq \|C\|.$
 \end{corollary}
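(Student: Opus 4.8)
The plan is to obtain this statement as an immediate consequence of Theorem~\ref{thm4}. The two assertions share both the conclusion $\|\delta_{S,T}X + C\| \geq \|C\|$ and the hypothesis $C \in ker(\delta_{S,T})$; the sole difference is that Theorem~\ref{thm4} additionally demands that the pair $(S,T)$ satisfy the Fuglede--Putnam property $(FP)_{B(H)}$ of Definition~\ref{defn1}. Consequently the whole task collapses to one point: verifying that, in the setting of this section, $(FP)_{B(H)}$ is in force automatically, after which Theorem~\ref{thm4} applies verbatim and there is nothing left to prove.

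First I would make explicit the standing assumption of the section, namely that $S$ and $T$ are \emph{normal} operators, since the section is concerned with orthogonality of commutators of the normal derivation. For normal $S$ and $T$, the classical Fuglede--Putnam--Rosenblum theorem guarantees that every $C \in B(H)$ with $SC = CT$ also satisfies $S^{*}C = CT^{*}$; this is precisely property $(FP)_{B(H)}$. Thus the hypothesis of Theorem~\ref{thm4} is met without further work. Feeding the given $C \in ker(\delta_{S,T})$ into that theorem, the bridging Lemma~\ref{lemk} supplies the reducing decomposition $S = S_1 \oplus S_2$, $T = T_1 \oplus T_2$, $C = C_1 \oplus C_2$ with $S_1, T_1$ normal, and the $2 \times 2$ operator-matrix estimate yields
\[
\|\delta_{S,T}(X) + C\| \;\geq\; \|\delta_{S_1,T_1}(X_1) + C_1\| \;\geq\; \|C_1\| \;=\; \|C\|,
\]
which is exactly the inequality sought.

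The main obstacle is not computational but the gap between what is written and what is true: as stated the corollary carries no normality or orthogonality hypothesis, yet the Birkhoff--James orthogonality $\|\delta_{S,T}X + C\| \geq \|C\|$ of the range of $\delta_{S,T}$ to its kernel can fail outright for general, non-normal $S$ and $T$. The entire substance of an honest argument therefore lies in isolating the condition under which $(FP)_{B(H)}$ holds for free --- normality of $S$ and $T$ --- so that the reduction to Theorem~\ref{thm4} is legitimate. Once that condition is secured, every remaining step is a direct appeal to the preceding theorem, and the proof is complete.
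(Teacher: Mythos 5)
Your proposal is correct in substance but takes a genuinely different route from the paper. The paper does not apply Theorem \ref{thm4} to the pair $(S,T)$ at all: it passes to $H\oplus H$ and forms the block operators $N=\left[\begin{array}{cc} S & 0\\ 0 & T\end{array}\right]$, $M=\left[\begin{array}{cc} 0 & C\\ 0 & 0\end{array}\right]$, $Y=\left[\begin{array}{cc} 0 & X\\ 0 & 0\end{array}\right]$, observes that $\delta_{N}(Y)+M$ has $\delta_{S,T}X+C$ as its only nonzero corner, and then applies Theorem \ref{thm4} to the single pair $(N,N)$, whose Fuglede--Putnam property comes from the normality of $N$. This is the classical Berberian dilation reducing a generalized derivation to an inner one. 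You instead invoke the Fuglede--Putnam--Rosenblum theorem for the pair $(S,T)$ directly and feed $C$ into Theorem \ref{thm4} as stated; this is shorter and avoids the doubling of the space. Both arguments hinge on exactly the same unstated hypothesis --- $N=S\oplus T$ is normal if and only if $S$ and $T$ are both normal --- so the corollary as literally written (arbitrary $S,T\in B(H)$) is not established by either route, and your write-up has the merit of saying so explicitly. One small caution: in your displayed chain you pass to $\|\delta_{S_1,T_1}(X_1)+C_1\|\geq\|C_1\|$, which silently reuses the interior estimate from the proof of Theorem \ref{thm4} rather than its statement; if you are quoting the theorem as a black box you should go straight from its hypothesis to $\|\delta_{S,T}X+C\|\geq\|C\|$ without re-opening the block decomposition.
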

\begin{proof}
On $H\oplus H$ consider the operator $M, N$ and $Y$ defined as: $$N=\left[
                                                                                                                         \begin{array}{cc}
                                                                                                                           S & 0 \\
                                                                                                                           0 & T \\
                                                                                                                         \end{array}
                                                                                                                       \right],\,\,M=\left[
                                                                                                                         \begin{array}{cc}
                                                                                                                           0 & C \\
                                                                                                                           0 & 0 \\
                                                                                                                         \end{array}
                                                                                                                       \right],\,\,Y=\left[
                                                                                                                         \begin{array}{cc}
                                                                                                                           0 & X \\
                                                                                                                           0 & 0 \\
                                                                                                                         \end{array}
                                                                                                                       \right].$$

                                                                                                                       Then $N$ is normal, $M \in {N}^{'}$ and $\delta _{N}(Y)+M=\left[
                                                                                                                         \begin{array}{cc}
                                                                                                                           0 & \delta _{S,T}X+C \\
                                                                                                                           0 & 0 \\
                                                                                                                         \end{array}
                                                                                                                       \right].$

 Applying Theorem \ref{thm4} to the operators $N,M$ and $Y$ and $M \in \delta _{S,T}$ and $\|\delta _{N}Y+M\|\geq \|M\|.$ Therefore, $C \in \delta _{S,T}$ and $\|\delta _{S,T}X+C\|\geq \|C\|.$
\end{proof}
\section{Conclusion}
In this paper, we have given results on norm inequality for commutators and also orthogonality of these commutators in Banach algebras.

\section{Acknowledgement}
The first author is grateful to AIMS South Africa for the visiting research fellowship to carry out this research. The second author is grateful for the financial support from NRF Kenya Grant No.: NRF/2016/2017/0034.


\begin{thebibliography}{19}
\addcontentsline{toc}{section}{References}

\bibitem{Kit2}\label{Kit2}
Kittaneh, F.: Inequalities for commutator of positive operators, \emph{J of Functional Analysis}, \textbf{250}, 132-143 (2007)


\bibitem{Oke}\label{Oke}
Okelo, N.B.: The Norm Attainability of some Elementary Operators, \emph{Applied Math. E-Notes} \textbf{13}, 1-7 (2013)


 \bibitem{Mec}\label{Mec}
 Mecheri, S.: On minimizing $\|T-(SX-XB)\|^{p}_{p}$, \emph{Serdica. Math. J.}, \textbf{126}, 119-126 (2000)



\bibitem{Sta}\label{Sta}
Stampfli, J.G: The norm of a derivation, \emph{Pacific J. of Math.}, (3), \textbf{33}, 239-246 (1970)


\bibitem{Wan}\label{Wan}
Wang, Y.Q., Hong, K.D.: Norms of commutators of self-adjoint operators,  \emph{J. Math. Appl.}, \textbf{342}, 747-751 (2008)


\end{thebibliography}
\end{document}